\documentclass[amssymb,twoside,11pt,amscd,reqno]{amsart}

\usepackage{latexsym,amsfonts}
\usepackage{amsmath,amsthm}          
\usepackage{euscript}                
\usepackage{epsfig}
\usepackage[all]{xy}


\theoremstyle{plain} \numberwithin{equation}{section}
\newtheorem{thm}{Theorem}[section]

\newtheorem{prop}[thm]{Proposition}

\theoremstyle{definition}
\newtheorem{remark}{Remark}[section]

\newtheorem{defn}[remark]{Definition}
\newtheorem{ex}[remark]{Example}

\newcommand{\bi}{\begin{itemize}}
\newcommand{\ei}{\end{itemize}}
\newcommand{\bp}{\begin{proof}}
\newcommand{\ep}{\end{proof}}

\setcounter{tocdepth}{1}

\begin{document}

\title{The K\"ahler rank of compact complex manifolds}

\author[Chiose]{Ionu\c{t} Chiose$^{\ast}$}
\thanks{$\ast$ Supported by a Marie Curie International Reintegration
Grant within the $7^{\rm th}$ European Community Framework Programme and CNCS grant PN-II-ID-PCE-2011-3-0269}

\date{}

\begin{abstract}
The K\"ahler rank was introduced by Harvey and
Lawson in their 1983 paper 
as a measure of the {\it k\"ahlerianity} of a compact complex surface. In this work we generalize this notion to the case of compact complex manifolds and we prove several results related to this notion. 
We show that on class $VII$ surfaces, there is a correspondence between the closed positive forms on a surface and those on a blow-up in a point. We also show that a manifold of maximal K\"ahler rank which satisfies an additional condition is in fact K\"ahler.\\
{\em Mathematics Subject Classification (2010)} 32J27 (primary), 32J15 (secondary)

\end{abstract}

\maketitle

\section*{Introduction}






In \cite{blaine}, Harvey and Lawson introduced the K\"ahler rank of a compact complex surface, a quantity intended to
measure how far a surface is from being K\"ahler. A surface has K\"ahler rank $2$ iff it is K\"ahler. 
It has K\"ahler rank $1$ iff it is not K\"ahler but still admits a closed (semi-) positive $(1,1)$-form whose zero-locus is contained in a curve. In the remaining cases, it has K\"ahler rank $0$.


In this paper we generalize the notion of K\"ahler rank to compact complex manifolds of arbitrary dimension and study its
properties.

First, we discuss the problem of the bimeromorphic invariance of the K\"ahler rank. There are examples that show that it is not a bimeromorphic invariant. However, two bimeromorphic surfaces have the same K\"ahler rank \cite{chiosetoma}. This was shown by classifying the surfaces of rank $1$. In this paper we take a different approach, local in nature, which was alluded to in \cite{chiosetoma}. Namely, we study the problem of when a plurisubharmonic function on the blow-up is the pull-back of a smooth function. However, this method leads to an involved system of differential equations, and we were able to solve this system only up to order $3$. Thus we obtain:

\begin{thm}\label{system2}
Let $X$ be a compact, complex, non-K\"ahler surface with $b_1(X)=1$, and let $p:X'\to X$ be the blow-up of $X$ at a point. Suppose that $\omega'$ is a closed, positive $(1,1)$ form on $X'$. Then there exists $\omega$ a closed positive $(1,1)$ form on $X$ of class ${\mathcal C}^1$ such that $p^*\omega=\omega'$.
\end{thm}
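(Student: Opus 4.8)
The plan is to split the statement into a cohomological part, where the non-K\"ahler hypothesis enters, and a purely local regularity part near the blown-up point $x_0$, the latter being where the real difficulty lies. Write $E=p^{-1}(x_0)\cong\PP^1$ for the exceptional curve. Since $p$ contracts $E$, any pull-back automatically satisfies $(p^*\omega)|_E=0$, so a necessary first step is to show $\int_E\omega'=0$. For this I would use that $X$, being a non-K\"ahler surface, has $b_1$ odd, and hence the cup-product form on $H^{1,1}_{\RR}$ is negative semi-definite (the positive part of $H^2$ is carried entirely by the $(2,0)\oplus(0,2)$-classes); the same holds on $X'$, since blowing up adjoins only the class $[E]$, of self-intersection $-1$. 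Because $\omega'$ is positive, $[\omega']^2=\int_{X'}\omega'\wedge\omega'\ge 0$, and semi-definiteness then forces $[\omega']^2=0$. By the Cauchy--Schwarz inequality for a semi-definite form, $[\omega']$ must lie in the radical, so $\int_E\omega'=\langle[\omega'],[E]\rangle=0$. As $\omega'|_E\ge 0$ has zero integral over $E$, I conclude $\omega'|_E=0$ pointwise.

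Next I would reduce the problem to finding a potential locally. Away from $E$ set $\omega=(p^{-1})^*\omega'$; this is already a smooth closed positive $(1,1)$-form on $X\setminus\{x_0\}$, and the entire content of the theorem is that it extends across $x_0$ as a $\mathcal C^1$ form. I would work in a ball $B$ about $x_0$, with $\tilde B=p^{-1}(B)$ the blow-up of $B$ at the origin. Since $\tilde B$ retracts onto $E$ and $\omega'|_E=0$, the class $[\omega']$ vanishes in $H^2(\tilde B,\RR)\cong H^2(E,\RR)$; combining this with $H^1(\tilde B,\mathcal O)=0$ (which holds because $R^1p_*\mathcal O_{\tilde B}=0$ and $B$ is Stein), one can glue the local $\partial\bar\partial$-potentials into a single smooth plurisubharmonic function $\phi$ on $\tilde B$ with $\omega'=dd^c\phi$.

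I would then descend this potential. The vanishing $\omega'|_E=0$ says precisely that $\phi|_E$ is harmonic on $E\cong\PP^1$, hence constant; thus $\phi$ is constant along the fibre $E$. Setting $\psi=\phi\circ p^{-1}$ on $B\setminus\{x_0\}$ and extending by that constant value at $x_0$ yields a continuous plurisubharmonic function $\psi$ on $B$ with $\phi=p^*\psi$, so that $dd^c\psi=\omega$ on the punctured ball. In particular this local form agrees with $(p^{-1})^*\omega'$ on $B\setminus\{x_0\}$, so no genuine patching is needed: one only has to verify the behaviour across the single point $x_0$.

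The main obstacle is the regularity, i.e.\ showing that $\omega=dd^c\psi$ is of class $\mathcal C^1$ at $x_0$. Here I would expand $\phi$ in blow-up coordinates $(u,v)\mapsto(u,uv)=(z_1,z_2)$ along $E=\{u=0\}$ and ask which smooth functions $\phi$, constant on $E$, descend so that $dd^c\psi$ has $\mathcal C^1$ coefficients in the variables $(z_1,z_2)$. Matching the Taylor coefficients of $\phi$ in the normal variable $u$ against the requirement that they be pull-backs of functions smooth downstairs produces an overdetermined system of differential equations for the coefficient functions along $E$. Solving this system is the crux of the argument; as flagged in the introduction, it can be controlled only up to order $3$, which is exactly what is needed to conclude that $\omega$ is $\mathcal C^1$ (and is precisely why the statement asserts $\mathcal C^1$ regularity rather than $\mathcal C^\infty$). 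Assembling this local extension with the smooth form $(p^{-1})^*\omega'$ elsewhere gives the desired closed positive $\mathcal C^1$ form $\omega$ on $X$ with $p^*\omega=\omega'$.
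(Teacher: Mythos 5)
Your cohomological preliminaries are fine (and in fact the paper gets $\int_E\omega'=0$ even more directly: by Harvey--Lawson, Proposition 37, on a surface with odd $b_1$ every closed positive $(1,1)$-form is $d$-exact), and the reduction to a local potential $\varphi'$ near $E$ with $\omega'=i\partial\bar\partial\varphi'$, constant on $E$, matches the paper. But the proof stops exactly where the theorem begins. The entire content of the statement is the $\mathcal C^1$ regularity of $\omega$ across $x_0$, and your treatment of it is a single sentence announcing that ``solving this system is the crux'' and that it ``can be controlled only up to order 3.'' That is a restatement of the problem, not an argument. Moreover, your description of where the system comes from is wrong: merely ``matching Taylor coefficients against the requirement that they be pull-backs'' is the \emph{conclusion} to be proved, and by itself it yields no equations at all. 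The paper's system arises from two inputs you never invoke: (i) the pointwise Monge--Amp\`ere degeneracy $\omega'\wedge\omega'=0$ (again Harvey--Lawson Prop.\ 37 --- note that your own computation $[\omega']^2=0$ combined with $\omega'\wedge\omega'\ge 0$ would have given you this pointwise vanishing, but you never draw that conclusion or use it), which after differentiating $\varphi'_{z\bar z}\varphi'_{w\bar w}=\varphi'_{z\bar w}\varphi'_{w\bar z}$ and restricting to $E$ produces coupled PDEs for the jets $P_{\alpha,\beta}=\partial^{\alpha+\beta}\varphi'/\partial z^\alpha\partial\bar z^\beta|_{z=0}$; and (ii) the growth conditions $w^\alpha\bar w^\beta P_{\alpha,\beta}(1/w)$ smooth at $0$, forced by smoothness of $\varphi'$ in the second chart of the blow-up, which are what eventually pin the $P_{\alpha,\beta}$ down to polynomials of the right bidegree.

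The heart of the paper's proof is then a Liouville-type analysis (Proposition \ref{radu}) of the resulting system for $f=\partial P_{2,0}/\partial\bar w$ and $g=P_{1,1}$, namely $g\,\partial_w f=2f\,\partial_w g$ and $g\,\partial_w\partial_{\bar w}g=|f|^2+|\partial_w g|^2$, using subharmonicity of $\ln|f|$ and $\ln g$, boundary and decay control from the growth conditions, and the maximum principle to force $f=0$ and $g=|a+bw|^2$; a further case analysis at order $4$ (split according to whether $P_{1,1}$ vanishes) completes the $\mathcal C^3$ regularity of $\varphi$ and hence the $\mathcal C^1$ regularity of $\omega$. None of this appears in your proposal, so the proof as written has a genuine gap precisely at the step that constitutes the theorem.
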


Second, we study the manifolds of maximal K\"ahler rank, i.e., those manifolds that admit a positive $d$-closed $(1,1)$-form of strictly positive volume. It is conjectured that such manifolds are in the Fujiki class ${\mathcal C}$. Under an additional condition, we prove that they are in fact K\" ahler:

\begin{thm}
Let $X$ be a compact complex manifold of dimension $n$ such that there exists $\{\alpha\}\in H^{1,1}_{BC}(X,{\mathbb R})$ a nef class such that $$\int_X\alpha^n>0$$ Suppose moreover that there exists $h$ a Hermitian metric on $X$ such that $$i\partial\bar\partial h=0, \partial h\wedge\bar\partial h=0$$ Then $X$ is K\"ahler.
\end{thm}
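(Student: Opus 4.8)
The plan is to argue by contradiction: assuming $X$ is not K\"ahler, I will manufacture from the nef and big class $\alpha$ a family of Hermitian metrics that is ``almost closed'' and has controlled volume and mass, and then clash it against the Harvey--Lawson obstruction to the existence of a K\"ahler metric. The first step is an elementary but decisive observation about the hypotheses on $h$: writing $h$ also for its fundamental $(1,1)$-form, the two conditions $\partial\bar\partial h=0$ and $\partial h\wedge\bar\partial h=0$ together force
\[
\partial\bar\partial(h^k)=k(k-1)\,h^{k-2}\wedge\partial h\wedge\bar\partial h+k\,h^{k-1}\wedge\partial\bar\partial h=0\qquad(1\le k\le n).
\]
Thus every power $h^k$ is pluriclosed; in particular $h$ is Gauduchon ($\partial\bar\partial h^{n-1}=0$), and this is exactly what will make the integrals below insensitive to $i\partial\bar\partial$-potentials.

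Next I use nefness. For each $\varepsilon>0$ pick $\psi_\varepsilon$ with $\alpha+i\partial\bar\partial\psi_\varepsilon\ge-\varepsilon h$ and set $\beta_\varepsilon:=\alpha+2\varepsilon h+i\partial\bar\partial\psi_\varepsilon\ge\varepsilon h>0$. By the Tosatti--Weinkove solution of the complex Monge--Amp\`ere equation on Hermitian manifolds there are $\varphi_\varepsilon$ and constants $c_\varepsilon>0$ with $\tilde\omega_\varepsilon:=\beta_\varepsilon+i\partial\bar\partial\varphi_\varepsilon>0$ and $\tilde\omega_\varepsilon^{\,n}=c_\varepsilon\,h^n$. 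Writing $\tilde\omega_\varepsilon=\alpha+2\varepsilon h+i\partial\bar\partial\Psi_\varepsilon$ with $\Psi_\varepsilon=\psi_\varepsilon+\varphi_\varepsilon$, I integrate by parts: since $\alpha$ and $i\partial\bar\partial\Psi_\varepsilon$ are $d$-closed while $\partial\bar\partial(h^k)=0$, every term of $\int_X\tilde\omega_\varepsilon^{\,n}$ carrying a factor $i\partial\bar\partial\Psi_\varepsilon$ drops out, leaving the purely cohomological quantity
\[
\int_X\tilde\omega_\varepsilon^{\,n}=\sum_{k=0}^{n}\binom{n}{k}(2\varepsilon)^k\int_X\alpha^{\,n-k}\wedge h^{k}\ \xrightarrow[\varepsilon\to0]{}\ \int_X\alpha^{n}>0 ,
\]
so $c_\varepsilon$ is bounded below by a positive constant. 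The same mechanism, using only $\partial\bar\partial h^{n-1}=0$, gives the uniform mass bound $\int_X\tilde\omega_\varepsilon\wedge h^{n-1}=\int_X\alpha\wedge h^{n-1}+2\varepsilon\int_X h^{n}$. Hence the $\tilde\omega_\varepsilon$ are positive $(1,1)$-forms of uniformly bounded mass whose volumes stay bounded away from $0$, with $d\tilde\omega_\varepsilon=2\varepsilon\,dh\to0$.

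Now suppose $X$ is not K\"ahler. By the Harvey--Lawson duality there is a nonzero positive $(n-1,n-1)$-current $S$ which is a boundary, $S=\partial\Gamma+\bar\partial\bar\Gamma$. Pairing against the smooth forms $\tilde\omega_\varepsilon$ and integrating by parts, the only surviving contribution comes from $\partial\tilde\omega_\varepsilon=2\varepsilon\,\partial h$, so
\[
\int_X\tilde\omega_\varepsilon\wedge S=-2\varepsilon\int_X\partial h\wedge\Gamma+\overline{(\cdots)}\ \xrightarrow[\varepsilon\to0]{}\ 0 .
\]
To reach a contradiction I must bound $\int_X\tilde\omega_\varepsilon\wedge S$ below by a positive constant, i.e.\ show that the weak limit $T=\lim\tilde\omega_{\varepsilon_j}$, a closed positive current in the class $\{\alpha\}$, is genuinely a K\"ahler current and is not concentrated on the degeneration locus of the $\tilde\omega_\varepsilon$.

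This lower bound is the crux, and I expect it to be the main obstacle. The volume bound $\tilde\omega_\varepsilon^{\,n}\ge c_0\,h^n$ only controls $\det\tilde\omega_\varepsilon$ and does not prevent the eigenvalues of $\tilde\omega_\varepsilon$ from becoming wildly anisotropic, so no pointwise estimate is available and a global mass-concentration analysis in the spirit of Demailly--P\u{a}un is required. Here the second hypothesis on $h$ earns its keep: in the relevant integrations by parts the quadratic torsion term is $\partial\tilde\omega_\varepsilon\wedge\bar\partial\tilde\omega_\varepsilon=4\varepsilon^2\,\partial h\wedge\bar\partial h=0$, which removes the principal error and lets the nef and big hypotheses (so that $\int_V\alpha^{\dim V}\ge0$ on every subvariety, with strict inequality in top degree) force the mass of $\tilde\omega_\varepsilon^{\,n}$ to remain spread out. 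Once this yields $\liminf_\varepsilon\int_X\tilde\omega_\varepsilon\wedge S>0$, it contradicts the vanishing above, so no boundary current $S$ exists and $X$ is K\"ahler.
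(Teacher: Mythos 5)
Your setup is sound as far as it goes: the observation that $i\partial\bar\partial h=\partial h\wedge\bar\partial h=0$ forces $i\partial\bar\partial(h^k)=0$ for all $k$ is exactly the role these hypotheses play in the paper, and the use of Tosatti--Weinkove to produce Hermitian forms $\tilde\omega_\varepsilon$ in (an approximation of) the class $\{\alpha\}$ with volume bounded below and mass bounded above is also the right machinery. But the argument has a genuine gap, and you name it yourself: the lower bound $\liminf_\varepsilon\int_X\tilde\omega_\varepsilon\wedge S>0$ is asserted to be ``required'' and attributed to ``a global mass-concentration analysis in the spirit of Demailly--P\u aun,'' but it is never carried out. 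This is not a routine step. The volume bound $\tilde\omega_\varepsilon^{\,n}\geq c_0h^n$ controls only the product of the eigenvalues of $\tilde\omega_\varepsilon$ with respect to $h$, so $\tilde\omega_\varepsilon$ can have an eigenvalue tending to zero exactly where the obstruction current $S$ concentrates, killing the pairing. The mass-concentration argument of Demailly--P\u aun that would rescue this is the hardest part of their paper and, as the author remarks, does not obviously adapt here (e.g.\ it uses the diagonal in $X\times X$, and it is unclear that $X\times X$ inherits a metric satisfying the hypotheses on $h$). So as written the proof does not close.

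The paper sidesteps precisely this difficulty by a different choice of duality and of right-hand side in the Monge--Amp\`ere equation. Instead of the Harvey--Lawson obstruction to a K\"ahler metric, it aims only at showing $\{\alpha\}$ is \emph{big}, which by Lamari's Hahn--Banach lemma reduces to bounding $\int_X\alpha\wedge g^{n-1}$ below against all Gauduchon metrics $g$. Assuming a degenerating sequence $g_m$ exists, one solves $\bigl(\alpha+\tfrac1m h+i\partial\bar\partial\varphi_m\bigr)^n=C_mg_m^{n-1}\wedge h$ --- with the obstructing Gauduchon metric built into the right-hand side --- and then a Chebyshev estimate on the set where $\alpha_m^{n-1}\wedge h$ is large, combined with an eigenvalue comparison, yields $\int_X\alpha_m\wedge g_m^{n-1}\geq C/(4nM)$, contradicting $\int_X\alpha_m\wedge g_m^{n-1}\leq 2/m$. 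This is exactly the lower bound you were missing, obtained cheaply because the test object $g_m^{n-1}\wedge h$ was placed inside the Monge--Amp\`ere equation rather than confronted afterwards. Note also that even after the contradiction one only gets a K\"ahler current, not a K\"ahler metric: the paper then invokes Demailly--P\u aun to place $X$ in Fujiki class ${\mathcal C}$ and a separate result (Fujiki class ${\mathcal C}$ plus SKT implies K\"ahler) to conclude. Your proposal, by trying to contradict the K\"ahler obstruction directly, would need strictly more than bigness, which makes the missing estimate even harder than the one the paper actually proves.
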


The same method yields a simpler proof of a key theorem of Demailly and P\u aun in \cite{demaillypaun}.

\section*{Acknowledgements} We would like to thank Radu Alexandru Todor for his help with the proof of Proposition \ref{radu}.

\section{Definition and examples}

The K\"ahler rank of a manifold is the maximal rank a closed positive $(1,1)$-form can reach on the manifold:

\begin{defn}
Let $X$ be a compact complex manifold of dimension $n$. The K\"ahler rank of $X$, denoted $Kr(X)$, is
\begin{equation}
Kr(X)=\max\left\{k\vert \exists\omega\in {\mathcal C}^{\infty}_{1,1}(X,{\mathbb R}),\omega\geq 0, d\omega=0, \omega^k\neq 0\right\}
\end{equation}
\end{defn}

The original definition in \cite{blaine} for surfaces required that the form $\omega$ appearing in the definition have zeroes in a analytic subset of $X$. 
Corollary 4.3 in \cite{chiosetoma} shows that the definition above coincides with the one in \cite{blaine} for surfaces.

\begin{remark}
Note that if $Kr(X)=\dim X$ then for every $p\in\overline{0,n}$ the operator $\partial :H^{p,0}(X)\to H^{p+1,0}(X)$ is zero, while, if $Kr(X)=0$ then $\partial :H^{1,0}(X)\to H^{2,0}(X)$ is into. Indeed, if $\sigma\in H^{1,0}(X)\setminus\{0\}$ satisfies $\partial\sigma=0$, then $i\sigma\wedge\bar\sigma$ is a closed, non-zero positive $(1,1)$-form.
\end{remark}

\begin{remark}
As in the surface case considered in \cite{blaine}, on a compact complex manifold $X$ of K\"ahler rank $Kr(X)=k$, there exists a complex analytic canonical foliation ${\mathcal F}$ of codimension $k$. It is defined on the open set
\begin{equation}
{\mathcal B}=\{x\in X\vert\exists\omega\in {\mathcal C}^{\infty}_{1,1}(X,{\mathbb R}), d\omega=0, \omega\geq 0, \omega^k(x)\neq 0\}
\end{equation}
and is characterized by $\omega^k\vert{\mathcal F}=0,\forall\omega\geq 0,d\omega=0$.
\end{remark}

\begin{ex}
A compact complex surface $X$ has K\"ahler rank $2$ if and only if it is K\"ahler (see remark \ref{kahsurf} below) and this is equivalent to $b_1(X)$ even (see \cite{lamari}). When $b_1(X)$ is odd but at least $3$, then $H^{1,0}(X)\neq 0$ and if $\sigma$ is a non-zero holomorphic $1$-form on $X$ then it is $d$-closed, hence $i\sigma\wedge\bar\sigma$ is a $d$-closed positive $(1,1)$-form on $X$. If $b_1(X)=1$, then the main results of \cite{chiosetoma} and \cite{brunella} show that the only surfaces of K\"ahler rank equal to $1$ are the Inoue surfaces and some Hopf surfaces. The other known surfaces (the other Hopf surfaces and the Kato surfaces) have K\"ahler rank $0$.
\end{ex}

\begin{ex}
In \cite{hironaka} the author constructed an example of a $3$-fold $X$ which is a proper modification of a K\"ahler manifold but which is not K\"ahler. In fact, it
is a proper modification $p:X\to {\mathbb P}^3$ of the projective space. One can take $p^*\omega_{FS}$, where $\omega_{FS}$ is the Fubiny-Study metric, to obtain a closed positive $(1,1)$-form, not everywhere degenerate, on a manifold that is not K\"ahler. Therefore, unlike the surface case, in higher dimensions there are manifolds of maximal K\"ahler rank and which are not K\"ahler.
\end{ex}

\begin{ex}
The well-known Iwasawa $3$-fold is the quotient $H/\Gamma$ where $H$ is the group of matrices of the form 
\[ \left( \begin{array}{ccc}
1 & x & z \\
0 & 1 & y \\
0 & 0 & 1 \end{array} \right)\] 
with complex entries, and $\Gamma$ is the subgroup of the matrices whose entries have integer real and imaginary entries. Then the holomorphic $1$-forms on $dx$, $dy$ and $dz-xdy$ on $H$ induce three holomorphic $1$-forms on $H/\Gamma$ denoted by $\sigma_1$, $\sigma_2$ and $\sigma_3$ respectively. Then $d\sigma_3=-\sigma_1\wedge\sigma_2$, hence $\sigma_3$ is not $d$-closed, therefore $Kr(H/\Gamma)\leq 2$. But $\sigma_1$ and $\sigma_2$ are $d$-closed, therefore the form $\omega=i\sigma_1\wedge\bar\sigma_1+i\sigma_2\wedge\bar\sigma_2$ is closed and positive, and $\omega^2\neq 0$, therefore the Kahler rank is $2$.
\end{ex}

\begin{ex}
In \cite{oguiso} the author constructed a Moishezohn $3$-fold $Y$ that contains an algebraic $1$-cycle 
${\ell }$ homologous to zero and which moves and covers the whole $Y$. Such a manifold cannot have maximal K\"ahler rank. Indeed, if $\omega$ is a closed positive $(1,1)$-form on $Y$, and if $y\in Y$ is arbitrary, let ${\ell}'$ be a $1$-cycle passing through $y$ and which is homologous to zero. Then $$\int_{{\ell}'}\omega=0$$ and therefore at $y$, $\omega$ cannot have rank $3$. Therefore $\omega^3=0$. This example shows that for dimension at least $3$ the K\"ahler rank is not a bimeromorphic invariant. However, it is expected that, if $Y\to X$ is the blow-up of a compact complex manifold $X$ in a point, then $Kr(X)=Kr(Y)$.
\end{ex}

\begin{ex}
In \cite{fuliyau} the authors constructed a complex structure on  the connected sum
${\#}_kS^3\times S^3$ of $k\geq 2$ copies of $S^3\times S^3$ and a banced metric $g^2$
which is $i\partial\bar\partial$-exact. Such a manifold has K\"ahler rank equal to $0$. Indeed, if
$\omega$ is a closed positive $(1,1)$-form, then its trace with respect to $g^2$ is zero, hence
the form $\omega$ has to be $0$.
\end{ex}

\begin{remark}
Starting with the above examples, and taking products, one can obtain compact complex manifolds of any dimension $n\geq 2$ and any K\"ahler rank $0\leq Kr\leq n$.
\end{remark}

\section{The bimeromorphic invariance of the K\"ahler rank for class $VII$ surfaces}

In this section we discuss the bimeromorphic invariance of the K\"ahler rank on class $VII$ surfaces, the only non-trivial case. We show that the problem can be reduced to a system
of differential equations, and then we solve the system up to order $3$, thus proving theorem \ref{system2}

\subsection{Preliminaries}

Suppose $X$ is a surface with $b_1=1$ and let $\pi: X'\to X$ be the
blow-up of $X$ in a point $p$. Let $\gamma^{0,1}$ be a $\bar\partial$ closed $(0,1)$ form on $X$ which generates $H^{0,1}(X)$. Then $\gamma'^{0,1}=\pi^*\gamma^{0,1}$ generates $H^{0,1}(X')$.

Let $\omega'$ be a closed, positive $(1,1)$ form on $X'$; then it is $d$ exact \cite{blaine}, Proposition 37. We want to show that there exists $\omega$ on $X$ such that $\pi^*\omega=\omega'$. Then on $X'$, $\omega'$ can be written as 
\begin{equation}
\omega'=\mu\overline{\partial\gamma'^{0,1}}+\overline{\mu}\partial\gamma'^{0,1}+i\partial\bar\partial\phi'
\end{equation}
where $\mu\in {\mathbb C}$ and $\phi'\in {\mathcal C}^{\infty}(X',{\mathbb R})$. We need to show that $\phi'$ is the pull-back of a ${\mathcal C}^{\infty}$ function $\phi$ on $X$.

Locally on a disk $\Delta^2=\{\vert z\vert <1\}$ around $p$ on $X$, $\gamma^{0,1}$ is $\bar\partial$ exact, so it can be written
as $\gamma^{0,1}\vert\Delta^2=\bar\partial f$, where $f\in {\mathcal C}^{\infty}(\Delta^2)$. Then on $\pi^{-1}(\Delta^2)$,
\begin{equation}
\omega'=i\partial\bar\partial (2{\rm Im}(\bar\mu f')+\phi')
\end{equation}
where $f'=\pi^*f$. Set $\varphi'=2Im(\bar\mu f)+\phi'$. We need to show that $\varphi'$ is the pull-back of a smooth function on $\Delta^2$. 

So let $\pi:\hat{\Delta}^2\to\Delta^2$ be the blow-up of the unit disk in ${\mathbb C}^2$, let $E$ be the exceptional divisor, and suppose that locally $\pi$ is given by $(z,w)\to (z,zw)=(z_1,z_2)$. The exceptional divisor is given by $\{z=0\}$. Let $\varphi'$ be a ${\mathcal C}^{\infty}$ function on $\hat{\Delta}^2$. Then we have

\begin{prop}
There exists $\varphi$ a ${\mathcal C}^\infty$ function on $\Delta^2$ such that $\varphi'=\pi^*\varphi$ if and only if there
exist $A_{\alpha,\beta}^{p,q}\in {\mathbb C}$ such that 
\begin{equation}\label{system1}
{\frac{\partial^{\alpha+\beta}\varphi'}{\partial z^{\alpha}\partial\bar z^{\beta}}\vline}_{z=0} =\sum
_{p=0}^{\alpha}\sum_{q=0}^{\beta}
\binom{\alpha}{p}\binom{\beta}{q}A_{\alpha,\beta}^{p,q}w^p\bar w^q
\end{equation}
\end{prop}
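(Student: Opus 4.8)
The plan is to characterize exactly when a smooth function $\varphi'$ on the blow-up $\hat{\Delta}^2$ descends to a smooth function $\varphi$ on $\Delta^2$. The necessity direction is straightforward: if $\varphi' = \pi^*\varphi$, then in the chart $(z,w) \mapsto (z, zw) = (z_1, z_2)$ we have $\varphi'(z,w) = \varphi(z, zw)$, and I would simply differentiate this composition. Applying $\partial/\partial z$ and $\partial/\partial\bar z$ repeatedly and using the chain rule — noting that $\partial z_2/\partial z = w$ and $\partial \bar z_2/\partial\bar z = \bar w$ — produces the Leibniz-type expansion in which the coefficient $A_{\alpha,\beta}^{p,q}$ is exactly the partial derivative $\partial^{p+q}\varphi/\partial z_1^{\alpha-p}\partial z_2^{p}\partial\bar z_1^{\beta-q}\partial\bar z_2^{q}$ evaluated at the center, and the powers $w^p\bar w^q$ arise from differentiating the $z_2 = zw$ slot. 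Restricting to $z=0$ (the exceptional divisor $E$) then yields precisely the stated identity \eqref{system1}, with the binomial coefficients recording how many of the $\alpha$ (resp. $\beta$) derivatives hit the $z_2$ (resp. $\bar z_2$) argument.

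For the sufficiency direction, the goal is to reconstruct $\varphi$ from $\varphi'$, and the natural candidate for its Taylor data at $p=0$ is to \emph{define} the formal partial derivatives of $\varphi$ via the coefficients $A_{\alpha,\beta}^{p,q}$ read off from $\varphi'$. The content of the condition \eqref{system1} is precisely that these coefficients are \emph{consistent}: the right-hand side, as a function of $w$ restricted to $E \cong \mathbb{P}^1$, is a polynomial of the prescribed bidegree $(\alpha, \beta)$ in $(w, \bar w)$, so its coefficients are well-defined complex numbers independent of any further choices. The plan is to use these $A_{\alpha,\beta}^{p,q}$ as the Taylor coefficients of a function on $\Delta^2$ and then invoke Borel's lemma (or Whitney extension) together with a blow-up-descent argument to produce an actual smooth $\varphi$ whose pull-back agrees with $\varphi'$ to infinite order along $E$; away from $E$ the map $\pi$ is a biholomorphism, so $\varphi$ is determined there directly by $\varphi' \circ \pi^{-1}$.

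The main obstacle I expect is the gluing step: one must check that the candidate $\varphi$ defined off the exceptional fiber via $\varphi' \circ \pi^{-1}$ extends smoothly across $p$, and that this extension is compatible with the prescribed jet at $p$. Because $\pi^{-1}$ blows up at $p$, controlling the behavior of $\varphi' \circ \pi^{-1}$ as $z_1, z_2 \to 0$ requires that all the directional limits of the derivatives of $\varphi'$ along different rays into $E$ agree — and this is exactly what the polynomiality encoded in \eqref{system1} guarantees. The delicate point is that smoothness on $\Delta^2$ demands the full jet be realized by a single function, which is why the identity must hold for \emph{all} $(\alpha,\beta)$ simultaneously and why the binomial normalization is forced. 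I would organize the argument so that one first verifies the formal consistency of the jet, then appeals to the fact that a function with a consistent jet along $E$ that is smooth off $E$ and matches $\varphi'$ descends; the chain-rule bookkeeping in the necessity direction is routine, but it dictates the precise shape of the reconstruction and so should be carried out carefully.
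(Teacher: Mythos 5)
Your proposal is correct and follows essentially the same route as the paper: the chain rule applied to $\varphi'(z,w)=\varphi(z,zw)$ for necessity, and descent of $\varphi'$ through $\pi$ (a biholomorphism off $E$, with the constants $A^{p,q}_{\alpha,\beta}$ serving as the jet of $\varphi$ at the center) for sufficiency. The paper's own proof is in fact terser than your outline --- it simply asserts that the descended function is ${\mathcal C}^{\infty}$ --- and your identification of which derivative of $\varphi$ multiplies $w^p\bar w^q$ (namely $\partial^{\alpha+\beta}\varphi/\partial z_1^{\alpha-p}\partial z_2^{p}\partial\bar z_1^{\beta-q}\partial\bar z_2^{q}$ at the origin) is the right one; the paper's displayed formula interchanges $p$ and $\alpha-p$, which is immaterial for the statement as it only asserts existence of the constants.
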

\begin{proof}
If $\varphi'=\pi^*\varphi$, with $\varphi\in{\mathcal C}^{\infty}(\Delta^2)$, then, from $\varphi'(z,w)=\varphi (z, zw)$ and the chain rule, we obtain the above equation with 
\begin{equation}
A_{\alpha,\beta}^{p,q}=\frac{\partial^{\alpha+\beta}\varphi}{\partial z_1^p\partial z_2^{\alpha -p}\partial\bar z_1^q\partial\bar z_2^{\beta-q}}(0)
\end{equation}
Conversely, if $\varphi'$ satisfies the above conditions on its partial derivatives, then $\varphi'\vert_{E}$ is constant, and it induces a continuous function $\varphi$ on $\Delta^2$. It is actually ${\mathcal C}^{\infty}$, with the partial derivatives at $0$ equal to $A_{\alpha,\beta}^{p,q}$ as above. 
\end{proof}

\begin{remark} If the above equation \ref{system1} holds only for $\alpha+\beta \leq k$, it follows that $\varphi'$ is the pull-back of a ${\mathcal C}^k$ function $\varphi$.
\end{remark}

So in order to prove that $\varphi'$ is the pull-back of a ${\mathcal C}^{\infty}$ function $\varphi$ on $\Delta^2$, it is enought to prove that 

\begin{equation}
{\frac{\partial^{\alpha+\beta}\varphi'}{\partial z^{\alpha}\partial\bar z^{\beta}}\vline}_{z=0}
\end{equation}
are polynomials in $w$ and $\bar w$ of degrees $\alpha$ and $\beta$ respectively.

\subsection{The system of differential equations} Now we set up the system of differential equations which needs to be solved in order to prove that $\omega'$ is the pull-back of a smooth $\omega$.

We will use the fact that $\omega'$ is of rank $1$ (\cite{blaine}, Proposition 37), i. e., that
\begin{equation}
\omega'\wedge\omega'=0
\end{equation}
and we will show that $\varphi$ is of class ${\mathcal C}^3$, i. e., that $\omega'$ is the
pull-back of a ${\mathcal C}^1$ form.

First, $\omega'=i\partial\bar\partial\varphi'$ and it is positive, hence $\varphi'$ is plurisubharmonic.
Restricted to the exceptional divisor $E$, it follows that $\varphi'\vert_E$ is constant. Hence
$\varphi'$ is the pull-back of a continuous function $\varphi$ on $\Delta^2$.

Next, denote by 
\begin{equation}
P_{\alpha,\beta}={\frac{\partial^{\alpha+\beta}\varphi'}{\partial z^{\alpha}\partial\bar z^{\beta}}\vline}_{z=0}
\end{equation}
which are ${\mathcal C}^{\infty}$ functions on ${\mathbb C}$. Since $\varphi'$ is defined on the whole $\hat{\Delta}^2$,
the functions $P_{\alpha,\beta}$ satisfy the following {\it growth conditions}:
\begin{equation}\label{growth}
w^{\alpha}\bar w^{\beta}P_{\alpha,\beta}\left(\frac 1w\right )
\end{equation}
can be extended to ${\mathcal C}^{\infty}$ functions at $0$.

Consider the equation $\omega'\wedge\omega'=0$ written in local coordinates $(z,w)$:
\begin{equation}
\frac{\partial^2\varphi'}{\partial z\partial\bar z}\cdot \frac{\partial^2\varphi'}{\partial w\partial\bar w}=
\frac{\partial^2\varphi'}{\partial z\partial\bar w}\cdot\frac{\partial^2\varphi'}{\partial w\partial\bar z}
\end{equation}
Take 
\begin{equation}\frac{\partial^{\alpha+\beta}}{\partial z^{\alpha}\partial\bar z^{\beta}}
\end{equation}
and restrict it to $z=0$; we obtain

\begin{equation*}
\sum_{p=0}^{\alpha}\sum_{q=0}^{\beta}\binom{\alpha}{p}\binom{\beta}{q}P_{p+1,q+1}\frac{\partial^2P_{\alpha-p,\beta-q}}{\partial w\partial\bar w}=
\end{equation*}
\begin{equation}\label{system}
=\sum_{p=0}^{\alpha}\sum_{q=0}^{\beta}\binom{\alpha}{p}\binom{\beta}{q}\frac{\partial P_{p+1,q}}{\partial\bar w}\frac{\partial P_{\alpha-p,\beta-q+1}}{\partial w}
\end{equation}
which gives a system of partial differential equations in the unknowns $P_{\alpha,\beta}$ which satisfy the conditions \ref{growth} and moreover $\overline{P}_{\alpha,\beta}=P_{\beta,\alpha}$.

We know that $P_{0,0}$ is constant, and from 
\begin{equation}
P_{1,1}\cdot\frac{\partial^2 P_{0,0}}{\partial w\partial\bar w}=\frac{\partial P_{1,0}}{\partial\bar w}\cdot\frac{\partial P_{0,1}}{\partial w}
\end{equation}
we obtain that $P_{1,0}$ is holomorphic, and from the growth condition \ref{growth} it follows that $P_{1,0}$ has the desired form, i. e., it is a polynomial in $w$ of degree $1$. This shows that $\varphi$ is a function of class ${\mathcal C}^1$.


\subsection{The proof of Theorem \ref{system2}} We complete the proof of theorem \ref{system2}. We show that $\varphi$ is in fact of class ${\mathcal C}^3$, hence $\omega$ is of class ${\mathcal C}^1$.

For $\alpha=2$ and $\beta=0$ in \ref{system} we obtain

\begin{equation}
P_{1,1}\cdot\frac{\partial^2P_{2,0}}{\partial w\partial\bar w}=2\frac{\partial P_{2,0}}{\partial\bar w}\cdot\frac{\partial P_{1,1}}{\partial w}
\end{equation} 
and for $\alpha=1,\beta=1$ we obtain

\begin{equation}
P_{1,1}\cdot\frac{\partial^2P_{1,1}}{\partial w\partial\bar w}=\frac{\partial P_{2,0}}{\partial\bar w}\cdot\frac{\partial P_{0,2}}{\partial w}+\frac{\partial P_{1,1}}{\partial\bar w}\cdot\frac{\partial P_{1,1}}{\partial w}
\end{equation}

Set $$f=\frac{\partial P_{2,0}}{\partial\bar w}$$ and $g=P_{1,1}$. Then $f$ and $g$ satisfy the following properties: they are ${\mathcal C}^{\infty}$ functions on ${\mathbb C}$; $g$ has real values; the functions

\begin{equation}\label{growth1}
w\bar w g\left(\frac 1w\right)
\end{equation}

and

\begin{equation}\label{growth2}
\frac{w^2}{\bar w^2}\cdot f\left(\frac 1w\right)
\end{equation}

are ${\mathcal C}^{\infty}$ at $0$, and moreover $f$ and $g$ satisfy the following equations:

\begin{equation}\label{1eq}
\frac{\partial f}{\partial w}\cdot g=2 f\cdot\frac{\partial g}{\partial w}
\end{equation}
\begin{equation}\label{2eq}
g\cdot\frac{\partial^2g}{\partial w\partial\bar w}=\vert f\vert^2+\left|\frac{\partial g}{\partial w}\right|^2
\end{equation}

We will show the following

\begin{prop}\label{radu}
 $f=0$ and $g$ is a quadratic form of rank $1$, i. e., $g(w)=|a+bw|^2$.
\end{prop}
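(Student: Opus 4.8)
The plan is to read two structural facts out of the equations \ref{1eq} and \ref{2eq} and then to play them against the growth conditions \ref{growth1} and \ref{growth2} by a potential-theoretic argument on $\mathbb{C}$. Write $\partial_w=\partial/\partial w$ and $\partial_{\bar w}=\partial/\partial\bar w$. First, rewriting \ref{1eq} as $(\partial_w f)\,g=2f\,\partial_w g$ gives
\[
\partial_w\!\left(\frac{f}{g^{2}}\right)=\frac{(\partial_w f)\,g-2f\,\partial_w g}{g^{3}}=0\qquad\text{on }\{g>0\},
\]
so $h:=f/g^{2}$ is anti-holomorphic wherever $g>0$. Second, recall that $g=P_{1,1}=\partial_z\partial_{\bar z}\varphi'\vert_{z=0}$ is a diagonal entry of the complex Hessian of the plurisubharmonic function $\varphi'$, hence $g\geq 0$; moreover at any zero of $g$ equation \ref{2eq} reads $0=|f|^{2}+|\partial_w g|^{2}$, so $f$ and $\partial_w g$ vanish there as well. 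Rewriting \ref{2eq} as
\[
(\log g)_{w\bar w}=\frac{g\,\partial_w\partial_{\bar w}g-|\partial_w g|^{2}}{g^{2}}=\frac{|f|^{2}}{g^{2}}\geq 0,
\]
shows that $u:=\log g$ is subharmonic on $\{g>0\}$, with absolutely continuous Riesz density proportional to $|f|^{2}/g^{2}$. The whole problem then reduces to forcing this density to vanish, i.e.\ to proving $f\equiv 0$.

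Next I would read off the behaviour at infinity. Substituting $w\mapsto 1/w$ in \ref{growth1} gives, for large $|w|$, that $g(w)=|w|^{2}G(1/w)$ with $G$ smooth at $0$, so $g$ grows at most quadratically and $\log g\leq 2\log|w|+O(1)$; likewise \ref{growth2} forces $f$ to stay bounded at infinity. By Jensen's formula the bound $\log g\leq 2\log|w|+O(1)$ makes the total Riesz mass of the subharmonic function $\log g$ at most $2$. These two facts, total mass $\leq 2$ and boundedness of $f$, are what I would confront with the local structure above.

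The heart of the argument is to prove $f\equiv 0$, and here I would split into two cases. If $g>0$ everywhere, then $h=f/g^{2}$ is an entire anti-holomorphic function; since $\log g$ is subharmonic its circular averages are non-decreasing, so $g$ cannot decay at infinity, and as $f$ is bounded, $h$ is then a bounded entire anti-holomorphic function, hence constant, and the finiteness of $\int_{\mathbb{C}}|h|^{2}g^{2}\,dA$ forces $h\equiv 0$, i.e.\ $f\equiv 0$. If instead $g$ has a zero, then, since $\{\log g=-\infty\}$ is polar it contains no curve, so (for smooth $g\geq 0$ with $\log g$ subharmonic) each zero is isolated of order exactly two and contributes a Riesz point mass equal to $2$ (its Lelong number); as the total mass is at most $2$, there is a single zero $w_0$, the mass is entirely atomic, and the absolutely continuous density $|f|^{2}/g^{2}$ must vanish identically, so again $f\equiv 0$.

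Once $f\equiv 0$, the identity $(\log g)_{w\bar w}=0$ makes $\log g$ harmonic on $\{g>0\}$. In the first case $\log g=2\,\mathrm{Re}\,\Phi$ with $\Phi$ entire, and $e^{\Phi}$ is a nowhere-vanishing entire function of at most linear growth, hence constant, so $g\equiv|a|^{2}$. In the second case $\log g-2\log|w-w_0|$ extends harmonically across $w_0$ and is bounded above on $\mathbb{C}$, hence constant, so $\log g=\log|b(w-w_0)|^{2}$ and $g=|a+bw|^{2}$ with $a=-bw_0$; in either case $g$ is a quadratic form of rank $\leq 1$ and $f=0$, as claimed. The step I expect to be the main obstacle is the middle one: controlling the zero set of $g$ rigorously enough to justify the Riesz-mass bookkeeping, namely establishing its polarity, pinning down the order-two (Lelong number $2$) local behaviour, and excluding degenerate zeros or zero-curves on which $\log g$ would fail to be subharmonic.
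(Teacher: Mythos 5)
Your strategy---extract from \ref{2eq} that $\log g$ is subharmonic on $\{g>0\}$ with Riesz density $|f|^2/g^2$, cap the total Riesz mass at $2$ via the growth condition \ref{growth1} and Jensen's formula, and charge each zero of $g$ a Lelong number of at least $2$---is genuinely different from the paper's and conceptually attractive, but it has a real gap exactly where you flag it: nothing in the proposal shows that $\log g$ extends to a subharmonic function on all of $\mathbb{C}$, i.e.\ that $Z=\{g=0\}$ is polar. Without that, ``the total Riesz mass of $\log g$'' is undefined and the Jensen bookkeeping cannot start; a priori $Z$ could contain a curve or even have interior, since all you are given is a smooth $g\ge 0$ solving a degenerate equation. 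Two further soft spots: (i) the claim that each zero has order \emph{exactly} two is unjustified (and unnecessary---smoothness plus $g\ge 0$, $dg(w_0)=0$ gives $g=O(|w-w_0|^2)$, hence Lelong number $\ge 2$, which is all your count needs); (ii) in the case $g>0$ everywhere, ``circular averages of $\log g$ are non-decreasing, so $g$ cannot decay'' does not make $h=f/g^2$ bounded, since $g$ may be small along a sequence tending to infinity while its circular means stay bounded below, so the Liouville step is not established. That last step has a cheap repair inside your own framework: $|h|g=\exp(\log|h|+\log g)$ is subharmonic on $\{g>0\}$, your mass bound gives $\int_{\mathbb{C}}|h|^2g^2\,dA<\infty$, and a nonnegative subharmonic function on $\mathbb{C}$ with finite integral vanishes by the sub-mean-value inequality over large disks, whence $f\equiv 0$.

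The paper avoids the polarity issue entirely by never leaving $D_g=\{g\neq 0\}$: on a connected component $D_g'$ it writes $f=\bar h g^2$ with $h$ holomorphic, notes from \ref{2eq} that $f=0$ on $\partial D_g'$, and applies the maximum principle to the auxiliary subharmonic function $|f(w)|/\sqrt{|w-w_0|}$ (the weight forces decay at infinity, where $f$ is only bounded) to get $f\equiv 0$; it then compares the resulting harmonic $\log g$ against $\log\bigl(|w-w_0|^{3/2}|w-w_1|^{3/2}\bigr)$ to exclude two zeros and against $\log|w-w_0|^2$ to identify $g=C|w-w_0|^2$. If you want to keep your Riesz-mass picture, the same device closes your gap: for a component $U$ of $\{g>0\}$ and $w_0\in\partial U$, the function $\log g-2\log|w-w_0|$ is subharmonic on $U$, bounded above near $w_0$ and at infinity, and tends to $-\infty$ at every other boundary point, so the extended maximum principle (exceptional polar set $\{w_0,\infty\}$) forces $\partial U\subseteq\{w_0\}$; hence $Z$ is empty or a single point, after which your mass count goes through. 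As written, though, the proposal is a plan with the decisive step missing, not a proof.
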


\begin{proof}
Let $D_g$ be the non-zero set of $g$, i. e., $D_g=\{w\in {\mathbb C}|g(w)\neq 0\}$. If $D_g=\emptyset$, then
$g=0$ and from \ref{2eq} it follows that $f=0$. 

If $D_g={\mathbb C}$, then $g$ is never $0$, and from \ref{1eq} it
follows that there exists $h$ holomorphic on ${\mathbb C}$ such that $f=\bar h g^2$. We can assume that $g>0$ on
${\mathbb C}$. Then from \ref{2eq} it follows that $\ln g$ is subharmonic, hence $\ln |f|$ is subharmonic on $D_f=\{w\in {\mathbb C}|f(w)\neq 0\}$. It follows that $|f|^2$ is subharmonic on ${\mathbb C}$ and since
$f$ is bounded (from \ref{growth2}), it follows that $|f|$ is constant.
If $|f|\neq 0$, then from $f=\bar h g^2$ we obtain that $i\partial\bar\partial\ln g=0$ and from \ref{2eq} we get that $|f|=0$, contradiction. Hence $f=0$ and equation \ref{2eq} implies that $\ln g$ is harmonic, i. e., $g=\exp({\rm Re} j)$, where $j$ is a holomorphic function on ${\mathbb C}$. From condition \ref{growth1} on $g$ it follows that $j$ is constant, hence also $g$ is constant.

Now assume that $D_g\neq \emptyset, {\mathbb C}$ and denote by $D_g'$ a connected component of $D_g$. Assume that $g>0$ on $D_g'$. From \ref{1eq} it follows that $f=\bar h g^2$ where $h$ is a holomorphic function on $D_g'$. Again \ref{2eq} implies that $\ln g$ is subharmonic on $D_g'$ and so $\ln |f|$ is subharmonic on $D_g'\cap D_f$. Let $w_0\in\partial D_g'$ (the boundary of $D_g'$) and  set 
\begin{equation}
f'(w)=\frac{f(w)}{\sqrt{|w-w_0|}}
\end{equation}
 as a function on $D_g'$. Since $\ln|f|$ is subharmonic, it follows that $\ln|f'|$ is also subharmonic on $D_g'$, so $|f'|^2$ is subharmonic on $D_g'$. Moreover, $f=0$ on the boundary $\partial D_g'$ (this follows again from \ref{2eq}) except possibly at $w_0$, and $\lim_{w\to \infty} |f'(w)|=0$ because $f$ is bounded at infinity (from \ref{growth2}). Since $f(w_0)=0$ it follows that $f'$ can be extended to a continuous function at $w_0$, with $f'(w_0)=0$. Hence $|f'|$ is a subharmonic function on $D_g'$, $f'=0$ on $\partial D_g'\cup\{\infty\}$, hence from the maximum principle, it follows that $f'=0$ on $D_g'$, hence also $f=0$ on $D_g'$. Since $f=0$ on $\{w\in {\mathbb C}|g(w)=0\}$, we get that $f=0$ on the whole ${\mathbb C}$. 

So $g$ satisfes the equation 
\begin{equation}
g\cdot\frac{\partial^2 g}{\partial w\partial\bar w}=\frac{\partial g}{\partial w}\cdot\frac{\partial g}{\partial\bar w}
\end{equation}
 and 
 \begin{equation}
w\bar w\cdot g\left ( \frac 1w\right )
\end{equation}
is ${\mathcal C}^{\infty}$ at $0$. If $g$ has two zeroes, $w_0$ and $w_1$, $w_0\neq w_1$, we consider as above $D_g'$ a connected component of $D_g$. Assume that $g>0$ on $D_g'$. Then $\ln g$ is harmonic on $D_g'$. Let $$g'(w)=\frac{g(w)}{\sqrt{|w-w_0|^3}\sqrt{|w-w_1|^3}}$$
Then $\ln g'$ is harmonic on $D_g'$, so $g'$ is subharmonic. Moreover, it is $0$ on the boundary $\partial D_g'$ of $D_g'$, except possibly at $w_0$ and $w_1$. But at $w_0$, $g(w_0)=0$ and 
\begin{equation}
\frac{\partial g}{\partial w}(w_0)=\frac{\partial g}{\partial \bar w}(w_0)=0
\end{equation} 
and the same at $w_1$, which implies that $g'$ is continuous on the whole boundary $\partial D_g'$.
At infinity, $g$ approaches $0$, and again by the maximum principle we obtain that $g=0$ on $D_g'$, contradiction. This shows that $g$ has exactly one zero. Assume that $g(w_0)=0$. Then consider the function 
$$g''(w)=\frac{g(w)}{|w-w_0|^2}$$
on ${\mathbb C}\setminus \{w_0\}$. Then $\ln g''$ is harmonic on ${\mathbb C}\setminus \{w_0\}$, and it is bounded at infinity. Moreover, since $g(w_0)=0$ and $dg(w_0)=0$, it follows that $g''$ is bounded near $w_0$. Hence $g''$ is a bounded, subharmonic function on ${\mathbb C}\setminus \{w_0\}$, so it is constant. Therefore $g(w)=C|w-w_0|^2$.
\end{proof}

Returning to our previous notations, we showed that $P_{2,0}$ is holomorphic, hence it is a polynomial of degree $2$ in $w$, and that $P_{1,1}$ is a polynomial of degree $\leq 1$ in $w$ and $\bar w$. Hence $\varphi$ is a function of class ${\mathcal C}^2$ and $\omega$ is continuous.

Next, we show that if $P_{1,1}\neq 0$, then $\varphi$ is actually 
${\mathcal C}^{3}$. 
First, we can assume, without loss of generality, that $P_{1,1}$ is constant. Indeed, if $P_{1,1}(w)=C|w-w_0|^2$, then we replace the functions $P_{\alpha,\beta}$ by 
\begin{equation}
\frac{1}{(w-w_0)^{\alpha}(\bar w-\bar w_0)^{\beta}}P_{\alpha,\beta}(w)
\end{equation}
and we end up with the same system of differential equations and the same {\it growth conditions}. 

When $\alpha=3$ and $\beta=0$ in \ref{system} we obtain 
\begin{equation}
P_{1,1}\cdot\frac{\partial ^2 P_{3,0}}{\partial w\partial \bar w}=3\cdot \frac{\partial P_{3,0}}{\partial\bar w}\cdot\frac{\partial P_{1,1}}{\partial w}
\end{equation}
and when $\alpha=2$ and $\beta=1$ we obtain
\begin{equation}
P_{1,1}\cdot\frac{\partial^2P_{2,1}}{\partial w\partial\bar w}+2\cdot P_{2,1}\cdot\frac{\partial P_{1,1}}{\partial w\partial\bar w}=\frac{\partial P_{1,1}}{\partial \bar w}\cdot\frac{\partial P_{2,1}}{\partial w}+2\cdot\frac{\partial P_{2,1}}{\partial\bar w}\cdot\frac{\partial P_{1,1}}{\partial w}
\end{equation}
$P_{1,1}$ is a non-zero constant, so the equations imply that both $P_{3,0}$ and $P_{2,1}$ are harmonic. By using the {\it growth conditions} we obtain that $P_{3,0}$ is holomorphic and that $P_{2,1}$ has the desired form.

If $P_{1,1}=0$, things get more complicated, but we can still show that $\varphi$ is of class ${\mathcal C}^3$. If $\omega (0)=0$, then for $\alpha+\beta=4$ the system \ref{system} implies the following equations:

\begin{equation}
3f\cdot\frac{\partial g}{\partial w}=2\frac{\partial f}{\partial w}\cdot g
\end{equation}

\begin{equation}
\bar g\cdot\frac{\partial f}{\partial w}+3g\cdot\frac {\partial^2 g}{\partial w\partial \bar w}=3\frac{\partial g}{\partial w}\cdot\frac{\partial g}{\partial\bar w}+3f\frac{\partial\bar g}{\partial w}
\end{equation}

\begin{equation}
2g\cdot\frac{\partial^2\bar g}{\partial w\partial\bar w}+2\bar g\frac{\partial^2 g}{\partial w\partial\bar w}=
\frac{\partial g}{\partial w}\cdot\frac{\partial\bar g}{\partial\bar w}+4\frac{\partial g}{\partial \bar w}\cdot\frac{\partial\bar g}{\partial w}+f\cdot\bar f
\end{equation}
where $$f=\frac{\partial P_{3,0}}{\partial\bar w}$$ and $g=P_{2,1}$ and we have the 
corresponding {\it growth conditions} for $f$ and $g$. This system can be solved by using similar methods as in Proposition \ref{radu}, so we omit it.

\section{Manifolds of maximal K\"ahler rank}

In this section we show that a compact complex manifold $X$ of dimension $n$ such that $Kr(X)=n$ and which moreover admits a special Hermitian metric is in fact K\"ahler:

\begin{thm}
Let $X$ be a compact complex manifold such that there exists a {\it nef} class $\{\alpha\}\in H^{1,1}_{BC}(X,{\mathbb R})$ such that $$\int_X\alpha^n>0$$ Suppose moreover that $X$ supports a Hermitian metric $h$ such that 
\begin{equation}\label{compatibility}
i\partial\bar\partial h=\partial h\wedge\bar\partial h =0
\end{equation}
Then $\{\alpha\}$ is {\it big} and $h$ is $\partial+\bar\partial$ cohomolgous to a K\"ahler metric. In particular $X$ is K\"ahler.
\end{thm}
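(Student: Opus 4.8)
The plan is to run the Monge--Amp\`ere plus duality argument underlying Demailly--Paun, using the special metric $h$ in place of a K\"ahler form; the compatibility conditions \eqref{compatibility} are exactly what keeps the cohomological bookkeeping alive after the loss of the $\partial\bar\partial$-lemma. First I would extract the algebraic content of \eqref{compatibility}, namely that every power of $h$ is pluriclosed. Since $\partial(h^{k})=k\,\partial h\wedge h^{k-1}$, a direct computation gives
\[
i\partial\bar\partial(h^{k})=k\,h^{k-1}\wedge(i\partial\bar\partial h)+k(k-1)\,h^{k-2}\wedge(i\,\partial h\wedge\bar\partial h)=0
\]
for every $k$, both terms vanishing by hypothesis. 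In particular $h$ is Gauduchon, and for any $d$-closed form $\Theta$ and any $\psi\in\mathcal C^{\infty}(X)$ one may move $i\partial\bar\partial$ onto $\Theta\wedge h^{k}$ to get $\int_X i\partial\bar\partial\psi\wedge\Theta\wedge h^{k}=\int_X\psi\,\Theta\wedge i\partial\bar\partial(h^{k})=0$. Thus wedging a representative of the Bott--Chern class $\{\alpha\}$ with any power of $h$ and integrating depends only on $\{\alpha\}$, precisely as against a K\"ahler class.

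Second, I would perturb and solve. Since $\{\alpha\}$ is nef, for each $\epsilon>0$ there is $\phi_\epsilon$ with $\beta_\epsilon:=\alpha+\epsilon h+i\partial\bar\partial\phi_\epsilon>0$, a genuine Hermitian metric. Using the solvability of the complex Monge--Amp\`ere equation on a compact Hermitian manifold, I solve for a potential $\psi_\epsilon$ and a constant $c_\epsilon>0$ with
\[
\tilde\beta_\epsilon:=\alpha+\epsilon h+i\partial\bar\partial\psi_\epsilon>0,\qquad \tilde\beta_\epsilon^{\,n}=c_\epsilon\,h^{n}.
\]
Because $\alpha+i\partial\bar\partial\psi_\epsilon$ is $d$-closed and lies in $\{\alpha\}$, the integration by parts of the first step gives $\int_X(\alpha+i\partial\bar\partial\psi_\epsilon)^{n-k}\wedge h^{k}=\int_X\alpha^{n-k}\wedge h^{k}$, so expanding the equation yields
\[
c_\epsilon\int_X h^{n}=\sum_{k=0}^{n}\binom{n}{k}\epsilon^{k}\int_X\alpha^{n-k}\wedge h^{k},
\]
whose leading term is $\int_X\alpha^{n}>0$. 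Hence $c_\epsilon\to\int_X\alpha^{n}/\int_X h^{n}>0$, so $c_\epsilon\ge c_0>0$ for all small $\epsilon$. This \emph{uniform} lower bound for the Monge--Amp\`ere density is the one place the positivity of the self-intersection enters, and it is what prevents the construction from degenerating onto the nef boundary.

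Third, I would argue by contradiction. Suppose $X$ is not K\"ahler. Since $h$ is Gauduchon, a Hahn--Banach duality argument in the spirit of Lamari and Sullivan produces a nonzero positive current $T$ of bidegree $(n-1,n-1)$ which is a boundary, $T=\partial S+\bar\partial\bar S$; in particular $\partial\bar\partial T=0$. Pairing with $\tilde\beta_\epsilon$ and using that $\alpha$ is $d$-closed, that $i\partial\bar\partial\psi_\epsilon$ is annihilated against the $\partial\bar\partial$-closed $T$, and that $T$ is a boundary, all the closed contributions drop out and $\int_X\tilde\beta_\epsilon\wedge T=\epsilon\int_X h\wedge T$. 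On the other hand, diagonalizing $\tilde\beta_\epsilon$ against $h$ with eigenvalues $\lambda_1,\dots,\lambda_n$ (so $\prod_j\lambda_j=c_\epsilon$) and writing $T_1,\dots,T_n\ge0$ for the corresponding diagonal densities of $T$, the arithmetic--geometric mean inequality gives pointwise
\[
\tilde\beta_\epsilon\wedge T\ \ge\ c_n\,c_\epsilon^{1/n}\,(\textstyle\prod_{j}T_j)^{1/n}\,h^{n},
\]
with $c_n>0$ a dimensional constant. Integrating and using $c_\epsilon\ge c_0$ gives $\int_X\tilde\beta_\epsilon\wedge T\ge\kappa:=c_n\,c_0^{1/n}\int_X(\prod_jT_j)^{1/n}h^{n}$. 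If $\kappa>0$, then $\epsilon\int_X h\wedge T\ge\kappa$ for every $\epsilon>0$ is absurd, forcing $X$ to be K\"ahler; bigness of $\{\alpha\}$ and the fact that $h$ is $\partial+\bar\partial$-cohomologous to the resulting K\"ahler metric then follow by tracking the construction.

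The main obstacle is exactly the positivity of $\kappa$. The pairing identity forces the test current $T$ to be a \emph{boundary}, and such currents tend to be supported on small sets, so the densities $\prod_jT_j$ may vanish almost everywhere and $\kappa$ may be $0$, at which point the estimate no longer bites. Reconciling the two conflicting demands on $T$---that it be a boundary (to kill the closed terms) yet sufficiently nondegenerate (to make the arithmetic--geometric estimate effective)---is the heart of the matter. I expect it to be resolved by regularizing the dual current, or by a sharper duality delivering a $T$ with a quantitative lower bound on $\int_X(\prod_jT_j)^{1/n}h^{n}$ in terms of $\int_X\alpha^{n}$. This is the Hermitian counterpart of the mass-concentration step in Demailly--Paun, and it is where the genuine analytic work lies.
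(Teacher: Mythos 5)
Your first two steps match the paper: the identity $i\partial\bar\partial(h^k)=0$ for all $k$ is exactly how the paper exploits condition \eqref{compatibility}, and the Monge--Amp\`ere step via nefness and Tosatti--Weinkove, with the constant bounded below by $\int_X\alpha^n>0$ through cohomological invariance of the intersection numbers, is the same. But the step you yourself flag as the ``main obstacle'' is a genuine gap, and the paper avoids it by a different choice of both the dual object and the right-hand side of the Monge--Amp\`ere equation. You aim directly at K\"ahlerness, so Hahn--Banach hands you a positive $(n-1,n-1)$-current $T$ which is a boundary; such a $T$ can be completely degenerate (e.g.\ concentrated on a subvariety), your density factor $\bigl(\prod_j T_j\bigr)^{1/n}$ can vanish identically, and $\kappa=0$ kills the argument. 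The paper instead proves only that $\{\alpha\}$ is \emph{big}, and for that the relevant duality is Lamari's Lemme 3.3: the failure of bigness is witnessed not by a degenerate boundary current but by a sequence of \emph{Gauduchon metrics} $g_m^{n-1}$ with $\int_X\alpha\wedge g_m^{n-1}\le\frac1m$ and $\int_X h\wedge g_m^{n-1}=1$. Crucially, the Monge--Amp\`ere equation is then solved with right-hand side $C_m\,g_m^{n-1}\wedge h$ rather than $c_\epsilon h^n$: this couples the solution $\alpha_m$ to the dual object $g_m$, so that a Chebyshev estimate on the set where $\alpha_m^{n-1}\wedge h$ is large relative to $g_m^{n-1}\wedge h$, combined with an eigenvalue comparison, yields $\alpha_m\ge\frac{C_m}{2nM}h$ outside a set of $g_m^{n-1}\wedge h$-measure at most $\frac12$. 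Integrating against the smooth, strictly positive $g_m^{n-1}$ (whose total $h$-mass is normalized to $1$) then gives a uniform lower bound $\int_X\alpha_m\wedge g_m^{n-1}\ge\frac{C}{4nM}$, contradicting $\int_X\alpha_m\wedge g_m^{n-1}\le\frac2m$. No nondegeneracy of a current is ever needed.

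The second omission is the endgame. Even after bigness, the paper does not deduce K\"ahlerness analytically: it quotes Demailly--P\u aun to conclude that a manifold carrying a K\"ahler current is in Fujiki class ${\mathcal C}$, and then invokes the author's earlier theorem that a Fujiki class ${\mathcal C}$ manifold admitting an SKT metric (which $h$ is, by \eqref{compatibility}) is K\"ahler. Your proposal has no substitute for this chain. So the overall architecture is recognizably the right one, but the two load-bearing steps --- the correct duality statement with its matching Monge--Amp\`ere normalization, and the reduction of K\"ahlerness to bigness via known results --- are precisely the ones left open.
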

\begin{remark}
Here {\it big} means that the class $\{\alpha\}$ contains a K\"ahler current, i.e., a closed positive current that dominates some Hermitian metric.
\end{remark}
\begin{remark}
Condition \ref{compatibility} is needed in order to bound some integrals (see \ref{integralinequality} below) and it is equivalent to 
\begin{equation}
i\partial\bar\partial h^k=0,\forall k=\overline{1,n-1}
\end{equation}
The condition \ref{compatibility} appeared in the work \cite{guanli}, where the authors attempted to solve the Monge-Amp\`ere equation on Hermitian manifolds.
\end{remark}
\begin{remark}\label{kahsurf}
When $n=2$, the existence of a Hermitian form satisfying  \ref{compatibility} is well-known, and we obtain another proof of the fact that a surface of K\"ahler rank equal to $2$ is K\"ahler. When $n=3$ just the equation $i\partial\bar\partial h=0$ is needed.
\end{remark}
\begin{remark}
The above theorem is a particular case of a conjecture of Demailly and P\u aun (see \cite{demaillypaun}, Conjecture 0.8) which states that if a manifold admits a nef class of strictly positive self-intersection, the the manifold is in Fujiki class ${\mathcal C}$, i.e., it is bimeromorphic to a K\"ahler manifold.
\end{remark}
\begin{proof}
First, we show that $\{\alpha\}$ is big. We need to show that there exists $\varepsilon_0>0$ and a distribution $\chi$ such that $\alpha+i\partial\bar\partial\chi\geq \varepsilon_0 h$. According to Lamari's result \cite{lamari}, Lemme 3.3, this is equivalent to showing that
\begin{equation}
\int_X\alpha\wedge g^{n-1}\geq \varepsilon_0\int_X h\wedge g^{n-1}
\end{equation}
 for any Gauduchon metric $g^{n-1}$ on $X$. So suppose that $\forall m\in {\mathbb N},\exists g_m^{n-1}$ a Gauduchon metric such that 
\begin{equation}
\int_X\alpha \wedge g_m^{n-1} \leq\frac 1m\int_X h\wedge g_m^{n-1}
\end{equation}
We can assume that 
\begin{equation}
\int_X h\wedge g_m^{n-1}=1 
\end{equation}
and therefore 
\begin{equation}
\int_X \alpha\wedge g_m^{n-1}\leq \frac 1m
\end{equation} 
Since $\{\alpha\}$ is nef, for every $m$ we can find 
$\psi_m\in {\mathcal C}^{\infty}(X,{\mathbb R})$ 
such that $\alpha+i\partial\bar\partial\psi_m\geq -\frac {1}{2m}h$. The main result of \cite{tossatiw} implies that we can solve the equation
\begin{equation}\label{monge}
\left (\alpha+\frac 1m h+i\partial\bar\partial\varphi_m\right )^n=C_m g_m^{n-1}\wedge h
\end{equation}
for a function $\varphi_m\in{\mathcal C}^{\infty}(X,{\mathbb R})$ such that if we set 
$\alpha_m=\alpha+\frac 1m h+i\partial\bar\partial\varphi_m$, then $\alpha_m>0$. The constant $C_m$ is given by
\begin{equation}
C_m=\int_X\left (\alpha+\frac 1m h\right )^n\geq\int_X\alpha^n=C>0
\end{equation}
Now 
\begin{equation}\label{integralinequality}
\int_X\alpha_m^{n-1}\wedge h=\int_X h\wedge \left(\alpha+\frac 1m h\right )^{n-1}\leq \int_Xh\wedge \left(\alpha+h\right )^{n-1}=M
\end{equation}
and if we set 
\begin{equation}
E=\left \{\frac{\alpha_m^{n-1}\wedge h}{g_m^{n-1}\wedge h}> 2M\right \}
\end{equation}
then 
\begin{equation}\label{edelta}
\int_{E}g_m\wedge h\leq \frac 12
\end{equation}
Therefore on $X\setminus E$ we have $\alpha_m^{n-1}\wedge h\leq 2M g_m^{n-1}\wedge h$. By looking at the eigenvalues of $\alpha_m$ with respect to $h$, from \ref{edelta} and \ref{monge}, it follows that on $X\setminus E$ we have $$\alpha_m\geq \frac{C_m}{2nM}h$$ Therefore

\begin{equation}\label{ineq}
\int_X \alpha_m\wedge g_m^{n-1}\geq\int_{X\setminus E}\alpha_m\wedge g_m^{n-1}\geq \frac {C_m}{2nM}\int_{X\setminus E}h\wedge g_m^{n-1}=
\end{equation}

\begin{equation*}
=\frac{C_m}{2nM}\left (\int_Xh\wedge g_m^{n-1}-\int_{E}h\wedge g_m^{n-1}\right )\geq \frac {C}{4nM} 
\end{equation*}
On the other hand
\begin{equation}
\int_X\alpha_m\wedge g_m^{n-1}=\int_X\alpha\wedge g_m^{n-1}+\frac 1m\int_X h\wedge g_m^{n-1}\leq \frac 2m
\end{equation}
contradiction with \ref{ineq}.

Therefore $\{\alpha\}$ is big, and from \cite{demaillypaun} it follows that $X$ is in the Fujiki class ${\mathcal C}$. Theorem 2.2 in \cite{chiose} implies that a manifold in the Fujiki class ${\mathcal C}$ and which is $SKT$
(strong K\"ahler with torsion, i.e., it supports a $i\partial\bar\partial$-closed Hermitian metric), is in fact K\"ahler.
\end{proof}

\begin{remark}
A very similar method gives a much simpler proof of a key result in \cite{demaillypaun} Theorem 0.5 that a nef class on a compact K\"ahler manifold of strictly positive self-intersection 
contains a K\"ahler current. Indeed, suppose 
$\{\alpha\}$ is not big, then by Lamari \cite{lamari} 
there exists a sequence of Gauduchon metrics such that 
$$\int_X\alpha\wedge g_m^{n-1}\leq \frac 1m $$ and 

$$\int_Xh\wedge g_m^{n-1}=1$$ If $h$ is assumed to be K\"ahler, the proof proceeds as above to obtain a contradiction. This proof is not independent of the proof of Demailly and P\u aun. In a few words, we replaced the explicit and involved construction of the metrics $\omega_{\varepsilon}$ in \cite{demaillypaun} by the abstract sequence of Gauduchon metrics given by the Hahn-Banach theorem, via Lamari \cite{lamari}
\end{remark}

\begin{remark}
An adaptation of the proof of Theorem 0.5 in \cite{demaillypaun} can not work in our case. One of the obstructions is that, if a complex manifold $X$ admits a Hermitian metric with property \ref{compatibility}, then it is not clear that $X\times X$ admits a Hermitian metric with the same property.
\end{remark}

\begin{remark}
We should also point out that a simplified proof of another part of the proof of the Demailly and P\u aun theorem on the K\"ahler cone was given recently by Collins and Tosatti \cite{collinstosatti}. Together with the above proof, one obtains a more compact proof of the main result in \cite{demaillypaun}
\end{remark}


 \providecommand{\bysame}{\leavevmode\hbox
to3em{\hrulefill}\thinspace}

Address:
\par\noindent{\it Ionu\c t Chiose}: \\
Institute of Mathematics of the Romanian Academy\\
P.O. Box 1-764, Bucharest 014700\\
Romania\\
{\tt Ionut.Chiose@imar.ro}\\

\end{document}